\newtheorem{thm}{Theorem}
\newtheorem{lem}[thm]{Lemma}
\newtheorem{claim}[thm]{Claim}
\newenvironment{proof}{{\bf Proof.} }{$\Box$\\}
\newcommand{\CD}{\mathrm{CD}}
\newcommand{\IPC}{\mathrm{IPC}}
\newcommand{\StwoS}{\mathrm{S2S}}
\newcommand{\PROP}{\mathrm{PROP}}
\newcommand{\intt}{\rm{int}}
\newcommand{\vp}{\varphi}
\newcommand{\imp}{\Rightarrow}
\newcommand{\cP}{\mathcal{P}}
\newcommand{\cT}{\mathcal{T}}
\newcommand{\cO}{\mathcal{O}}
\newcommand{\conj}{\wedge}
\newcommand{\disj}{\vee}
\newcommand{\mbb}[1]{{\mathbb{#1}}}
\newcommand{\set}[1]{\left\{#1\right\}}
\newcommand{\bR}{{\mbb{R}}}
\newcommand{\isfunc}[3]{#1\colon #2 \longrightarrow #3}
\newcommand{\bQ}{\mathbb{Q}}
\newcommand{\impl}{\Rightarrow}
\newcommand{\veps}{\varepsilon}
\newcommand{\Closed}{\mathrm{Closed}}
\newcommand{\clBelong}{\mathrm{clBelong}}
\newcommand{\sekrul}[4]{
\frac{ #1\vdash #2}{#3\vdash #4}
}
\newcommand{\dsekrul}[6]{
\frac{ #1\vdash #2\ \ ||\ \ #3 \vdash #4}{#5\vdash #6}
}
\newcommand{\tsekrul}[8]{
\frac{ #1\vdash #2\ \ ||\ \ #3 \vdash #4\ \ ||\ \ #5 \vdash #6}{#7\vdash #8}
}
\newcommand{\ldbrack}{\llbracket}
\newcommand{\rdbrack}{\rrbracket}
\title{Second order intuitionistic propositional logic 
of the real  line is decidable\footnote{Partially supported by MNiSW Grant N N206 355836}}
\author{Konrad Zdanowski\\ University of Cardinal Stefan Wyszy{\'n}ski, Warsaw}
\date{}
\begin{document}
\maketitle
\begin{abstract}
It is known that the set of tautologies of second order intuitionistic 
propositional logic, $\IPC 2$, is undecidable.
Here, we prove that the sets of formulas of $\IPC 2$ which are
true in the algebra of open subsets of  reals or rationals 
are decidable.
\end{abstract}

\section{Basic definitions}

We investigate the second order intuitionistic propositional logic, denoted as $\IPC 2$ 
(for a detailed treatment of this logic we refer to  the book \cite{SU}).
The set of formulas
of this logic is the same as in the classical case. We have standard propositional
connectives, universal and existential quantifiers and the set of propositions. Firstly, we present 
the set of axioms and rules in the Gentzen style. Then, we define various semantics for $\IPC 2$
and describe their status with respect to completeness and decidability of the tautology problem. 

Below, $\Gamma$ is a  multiset of formulas of $\IPC 2$, $\psi$,
$\vp$ and $\rho$ are formulas of $\IPC 2$ and $p$ is a proposition.
The letters I and E in names of rules stand for the   ``introduction''  
and ``elimination'', respectively.
\begin{enumerate}
\item Axioms:
$$
\Gamma, \psi \vdash \psi.
$$
\item Rules for conjunction:
  \begin{align*}
    \dsekrul{\Gamma}{\psi}{\Gamma }{\vp}{\Gamma}{\psi\land\vp} 
\,\textrm{($\land$I)}, & & 
    \sekrul{\Gamma}{\vp\land \psi}{\Gamma}{\vp},\ \ \ 
    \sekrul{\Gamma}{\vp\land \psi}{\Gamma}{\psi} \,\textrm{($\land$E)}.
  \end{align*}
\item Rules for disjunction:
  \begin{align*}
    \sekrul{\Gamma}{\vp}{\Gamma}{\vp\lor\psi}, \ \  \
    \sekrul{\Gamma}{\psi}{\Gamma}{\vp\lor\psi}\,\textrm{($\lor$I)}, & & 
\tsekrul{\Gamma,\vp}{\rho}{\Gamma,\psi}{\rho}{\Gamma}{\vp\lor\psi}{\Gamma}{\rho}
   \,\textrm{($\lor$E).}
  \end{align*}
\item Rules for implication:
  \begin{align*}
    \sekrul{\Gamma,\vp}{\psi}{\Gamma}{\vp\impl \psi}\, \textrm{($\impl$I)}, & &
    \dsekrul{\Gamma}{\vp\impl\psi}{\Gamma}{\vp}{\Gamma}{\psi}\,\textrm{($\impl$E).} 
  \end{align*}
\item The rule \emph{ex falso quodlibet}:
\begin{align*}
\sekrul{\Gamma}{\bot}{\Gamma}{\vp}\,\textrm{($\bot$E)}.
\end{align*}
\item Rules for quantifiers:
  \begin{align*}
    \sekrul{\Gamma}{\vp}{\Gamma}{\forall p\, \vp}\,\textrm{($\forall$I)}, & & 
    \sekrul{\Gamma}{\forall p\, \vp}{\Gamma}{\vp[p:=\psi]}\,\textrm{($\forall$E)},
  \end{align*}
  \begin{align*}
    \sekrul{\Gamma}{\vp[p:=\psi]}{\Gamma}{\exists p\, \vp}\,\textrm{($\exists$I)}, & & 
    \dsekrul{\Gamma}{\exists p\, \vp}{\Gamma,\vp}{\psi}{\Gamma}{\psi}
        \,\textrm{($\exists$E)}. 
  \end{align*}
In rules ($\forall$I) and ($\exists$E)  we have a restriction  that the variable
$p$ should not occur as a free variable of $\Gamma$ or $\psi$
\end{enumerate}
We denote the above calculus with $\IPC 2$ as well. 
Later, we consider sets of tautologies of $\IPC 2$
for various kinds of algebraic semantics. 
However, these sets will contain the set of theorems of the above
calculus with one exception of $\IPC 2^{-}$ (defined later). 
By $\IPC$ we denote intuitionistic propositional logic (without quantification)
and the corresponding calculus defined by removing from the above one the rules
for quantifiers.

In the above formulation we have no special rules and even no symbol for negation. 
This is so, because we do not treat negation as a primitive but rather we define 
$\neg\vp$ as $\vp\impl\bot$. We define also $\vp\Leftrightarrow \psi$ as 
$(\vp\imp \psi)\land (\psi\imp \vp)$. It is known that in the given calculus one can 
define from $\forall$ and $\imp$ all other connectives and quantifiers. So, the following 
formulas are provable, 
\begin{align*}
\bot & \Leftrightarrow  \forall p\, p\\
\vp \lor \psi & \Leftrightarrow \forall p((\vp\imp p)\imp((\psi\imp p)\imp p)),\\
\vp \land \psi & \Leftrightarrow \forall p ((\vp\imp (\psi \imp p))\imp p),\\
\exists q \vp(q) & \Leftrightarrow \forall p(\forall q(\vp(q)\imp p)\imp p).\\\
\end{align*}
In the proof of our main theorem we use this fact implicitly by restricting our 
translation to formulas with $\forall$ and $\imp$, only. Let us mention that 
we do need $\forall$ quantifier to define other connectives, see e.g.~\cite{SU} or \cite{SU10}.

It was proved by L\"{o}b (see \cite{L76}) that the above calculus has an undecidable provability problem.  
Even the $\forall$--free fragment of this logic is undecidable (see \cite{SU10}).
Before L\"{o}b's article, Gabbay in \cite{G74} considered  
$\IPC 2$ extended by a scheme called the axiom of constant domains ($\CD$),
$$
\forall p(\vp\lor \psi(p))\imp (\vp\lor \forall p \psi(p)),
$$
where $p$ is not free in $\vp$. In the context of first order 
intuitionistic logic this scheme was introduced by Grzegorczyk in~\cite{G64} and it is also
called Grzegorczyk scheme (one should not confuse this with Grzegorczyk axiom
in modal logic). Gabbay showed undecidability of ($\IPC 2 + \CD$), see~\cite{G74},
but his proof was later corrected by Sobolev in~\cite{S77}. Gabbay claimed that his result
generalizes to the case without $\CD$. According to Gabbay, the generalization
could be obtained by the finite axiomatizability of $\CD$ over  $\IPC 2$.
Nevertheless, it seems that there is no obvious method to define such an axiomatization.  

Sobolev also considered logics without full
comprehension axioms which correspond in our setting to rules $\forall E$ and $\exists I$. 
In the restricted versions of both rules we demand that the formula $\psi$ is atomic.
Let us call
this logic $\IPC 2^-$.  Sobolev showed then that any logic between $\IPC 2^-$ and ($\IPC 2 + \CD$)
is undecidable.

Now, we will discuss  various semantics for $\IPC 2$. There are two kinds
of popular semantics for intuitionistic logics, one constructed using Kripke models
and the other one based on Heyting algebras. In the Kripkean approach semantics 
is given by Kripke frames $(C,\leq, \{D_c\colon c\in C\})$, where
each $D_c$  is a subset of $\cP(C)$ and all $X\in D_c$ are upward closed 
with respect to $\leq$. Moreover, for each $c\leq c'\in C$ we have $D_c\subseteq D_{c'}$.
The set $C$ is the set of possible worlds and, for each $c\in C$, the set $D_c$ is the 
range of second order quantification in the world $c$. Then, the value of a given formula $\vp$
as the set of possible worlds at which $\vp$ is true
may be given by a usual inductive definition. In order to satisfy unrestricted versions
of rules $\forall E$ and $\exists I$ one needs to require that for any formula $\vp$, any $c\in C$ 
and  for any valuation $v$ of propositions into $D_c$, if $X_{\vp, v}$ 
is the set of worlds greater or equal $c$ at which
$\vp$ is satisfied with $v$, then $X_{\vp, v}\in D_c$, for each $c\in C$. 
This class of models gives a sound and complete semantics for $\IPC 2$. If we drop
the last condition concerning rules $\forall E$ and $\exists I$ then we get
a sound and complete semantics for $\IPC 2^{-}$.
The logics with  $\CD$ are complete w.r.t. ``constant domains semantics'' where all $D_c$'s are 
equal, see~\cite{G74}. Then,  the range for quantifiers is the same in all possible worlds. 
Let us stress that $D_c$'s may not contain all upward closed subsets of $C$. 
Adding the condition that $D_c$'s  contain all upward closed subsets of $C$ gives us
the, so called, \emph{principal} semantics. 
The set of tautologies of principal Kripkean semantics is recursively isomorphic 
to the classical second order logic as proved by
Kremer, see~\cite{K97a}.

Now, we turn to algebraic semantics. In the classical case the sound and complete semantics
for second order propositional logic is given by boolean algebras. In the intuitionistic case the sound
and complete semantics for $\IPC$ is given by Heyting algebras.
 
A Heyting algebra $(H,\leq, \cap,\cup,\rightarrow,0,1)$ is a distributive lattice with top and bottom elements  augmented with the pseudo-complement operation $\rightarrow$ which interprets implication. It is required that the following is well defined for $a,b\in H$,
$$
a\rightarrow b =\max\{ c\in H\colon c\cap a \leq b\}.
$$
In the case of $\IPC$, Heyting algebras give the sound and
complete semantics, e.g., by constructing a Heyting algebra from a Kripke model where all upward closed sets from the Kripke model form the universe of the algebra. 

In the algebraic semantics we have two ways in which we can interpret quantification.
In the, so called, principal semantics quantifiers range over all elements of a given algebra
and their meaning is given by infinite joins and meets, which have to exist. In the non-principal
semantics quantifiers range over a distinguished subset  of an algebra.
In the second order case the relation between Kripkean and algebraic semantics is not 
as straightforward as in the quantifier free case. An obvious way to translate a Kripke frame $(C,\leq, \dots)$
into a special kind of Heyting algebra, a topology, would be to define 
a topology of all upward closed subsets of $C$.   
However, not all upward closed subsets of a Kripke model
are within its domain of quantification. Moreover, for different possible worlds $c\in C$ 
we may have different domains $D_c$. 
Only if we consider principal Kripkean semantics then for a given
frame $(C, \leq, \dots)$ we may define a topology
of all upward closed subsets of $C$ (see, e.g., Exercise 2.8 in \cite{SU}).
For such topology the satisfaction relation is preserved
from the one of the Kripke frame. 

Lately, Philip Kramer in a personal communication
expressed his strong confidence that the complexity of the set of tautologies
for principal algebraic semantics is as hard as in the case of principal Kripkean
semantics. Kramer made his statement in his article \cite{K97b} (p. 296)
claiming that a nontrivial extension of methods from \cite{K97a}
would be needed. In a non-principal case a recent article by Kramer, \cite{K13}, establishes
its completeness w.r.t. $\IPC 2$.

A special case of Heyting algebras is given by topologies. Let us describe a satisfiability relation for $\IPC 2$
and topological principal semantics.
Let $\cT=(T,\cO(T))$ be an arbitrary topology, where $\cO(T)$ is the set of open subsets of $T$, 
and let $\isfunc{v}{\PROP}{\cO(T)}$ be a valuation
from the set of propositions. Then, we may define a value of a given formula of $\IPC 2$ in $\cT$ under 
$v$, denoted as $\ldbrack\vp\rdbrack^{\cT}_v$,  by recursion on the complexity of the formula:
\begin{enumerate}
\item $\ldbrack \bot\rdbrack^{\cT}_v = 0$,
\item $\ldbrack p\rdbrack^{\cT}_v = v(p)$,
\item $\ldbrack \psi\conj \gamma \rdbrack^{\cT}_v 
  = \ldbrack \psi \rdbrack^{\cT}_v \cap\ldbrack \gamma \rdbrack^{\cT}_v$,
\item $\ldbrack \psi\disj \gamma \rdbrack^{\cT}_v 
  = \ldbrack \psi \rdbrack^{\cT}_v \cup\ldbrack \gamma \rdbrack^{\cT}_v$,
\item $\ldbrack \psi\impl \gamma \rdbrack^{\cT}_v 
  = \intt(( T\setminus \ldbrack \psi \rdbrack^{\cT}_v) \cup \ldbrack \gamma \rdbrack^{\cT}_v)$,
\item $\ldbrack \exists p \psi \rdbrack^{\cT}_v 
      =  \bigcup_{a\in \cO(T)}\ldbrack \psi \rdbrack^{\cT}_{v(p\mapsto a)})$,
\item $\ldbrack \forall p \psi \rdbrack^{\cT}_v 
  = \intt( \bigcap_{a\in \cO(T)}\ldbrack \psi \rdbrack^{\cT}_{v(p\mapsto a)})$.
\end{enumerate}
We say that  $\vp$ is true in $\cT$ under $v$ if $\ldbrack \vp\rdbrack^{\cT}_v = T$.

It is known that the topologies of open subsets of $\bR$ or $\bQ$ form a sound and complete
semantics for $\IPC$. It can be easily shown that it is not the case for $\IPC 2$. 
Indeed, for each two $r, r'\in \bR$ there is a homeomorphism of $\bR$ 
into itself mapping $r$ to $r'$. It follows that if we have a sentence $\psi$
of $\IPC 2$ then either 
$\ldbrack \psi\rdbrack^{\bR}_v= \bR $ or $\ldbrack \psi \rdbrack^{\bR}_v= \emptyset$
(note that the value of $\ldbrack \psi\rdbrack^{\bR}_v$ does not depend on $v$, for a sentence $\psi$).
Therefore, for each sentence $\psi$ of $\IPC 2$, $\psi \lor \neg \psi$ is true in $\bR$. 
Of course, some sentences of that form are not provable in $\IPC 2$.
One can check that if we take an arbitrary quantifier free formula $\vp(p_1, \dots, p_n)$ which is a classical 
tautology and is not an intuitionistic tautology then for $\psi:=\forall p_1\dots \forall p_n \vp$, the formula
$\psi\lor \neg \psi$ is not provable in $\IPC 2$.
The very same argument works also for $\bQ$. 

On the other hand the sentence $\neg\forall p(p\lor \neg p)$ is true in $\bR$ (and in $\bQ$) though it 
is not valid intuitionistically  and moreover it is a classical contrtautology.
It follows that the $\IPC 2$ theory of $\bR$ or $\bQ$ is not a subset of 
classical tautologies.

Despite the above facts, the topologies of the real and rational lines are natural semantics
for intuitionistic logics. Firstly, it is natural to ask about second order propositional 
theory of these models which are  kind of standard models for $\IPC$. 
Secondly, one can see $\IPC 2$ over $\bR$ or $\bQ$ as a language capable of expressing some 
properties of these topologies. Thus, we may ask about the decidability of topological theories 
of $\bR$ or $\bQ$ expressible in $\IPC 2$. 

We show here, that $\IPC 2$ tautologies of principal semantics of reals or rationals  
are easier than in the general case, namely decidable.
The method used in the proof is an interpretation of these theories into the monadic theory 
of infinite binary tree, proved to be decidable by  Rabin's result (for details 
on this subject we refer to \cite{GWT02}).

Let $T^\omega=\set{0,1}^*$ be the set of finite binary sequences.
The infinite binary tree is a structure $\cT^\omega=(T^\omega, s_0, s_1, \leq)$, where  
$s_0(u)=u0$ and $s_1(u)=u1$ and $u \leq v$ when $u$ is an initial segment of $v$. 
A path in $\cT^\omega$ is an infinite set $P\subseteq T^\omega$ such that $P$ is closed 
on initial segments and is linearly ordered by~$\leq$. The empty sequence is denoted by $\varepsilon$.

The monadic second order logic is an extension of first order logic by second order quantifiers ranging
over subsets of a given universe. Rabin's theorem states that the monadic second order theory
of $\cT^\omega$ is decidable. We will denote this theory by $\StwoS$.  
It should be noted that the complexity of $\StwoS$ is non-elementary.
It became a standard method to show decidability of various problems by reducing them
to $\StwoS$. In the next section, we exhibit a reduction for theories 
of $\IPC 2$ of reals and rationals.

\section{Decidability on $\bR$ and $\bQ$}
\subsection{Interpretation in $\textrm{S2S}$}

We give interpretations of $\IPC 2$ theories of $\cO(\bR)$  and  $\cO(\bQ)$ in $\StwoS$. 
A similar though a bit simpler
 interpretation was used in \cite{Z04} showing decidability of $\IPC 2$ (and S4 with 
 propositional quantification) on trees of height and arity $\leq \omega$ (in the principal
 semantics).

\begin{thm}
The $\IPC 2$ theories of the open subsets of reals and the open subsets
of rational numbers are interpretable in  $\StwoS$.
\end{thm}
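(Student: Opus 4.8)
The plan is to realise the Heyting algebras $\cO(\bR)$ and $\cO(\bQ)$ inside the structure $\cT^\omega$, and then to translate the clauses of the satisfaction definition for $\IPC 2$ one by one into $\StwoS$. First I would replace $\bR$ by the homeomorphic interval $(0,1)$ and $\bQ$ by $\bQ\cap(0,1)$; homeomorphic spaces have isomorphic algebras of open sets, so the $\IPC 2$ theories are unchanged. To each node $u\in T^\omega$ with $|u|=n$ I associate the closed dyadic interval $C_u\subseteq[0,1]$ with left endpoint $\sum_i u_i2^{-i}$ and length $2^{-n}$, so that $C_\veps=[0,1]$ and $C_{u0},C_{u1}$ are the two closed halves of $C_u$; the paths of $\cT^\omega$ then correspond to the points of $[0,1]$. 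The heart of the encoding is that a closed set $F\subseteq[0,1]$ is recorded faithfully by the node set $B_F=\set{u : C_u\cap F\neq\emptyset}$: one checks that $F\mapsto B_F$ is injective, that its values are ``pruned'' (downward closed, no maximal element), that its image is $\StwoS$-definable, and that conversely from an arbitrary $X\subseteq T^\omega$ one $\StwoS$-definably extracts a closed set, the two procedures agreeing on the class of codes. Passing to complements in $[0,1]$ (and requiring the leftmost and rightmost paths to lie in the tree) one obtains a $\StwoS$-definable set of \emph{codes} in bijection with $\cO((0,1))$.

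With the coding fixed, I would write down $\StwoS$-formulas expressing, on codes of open sets: the inclusion order $U\leq U'$; the code of the whole space; the Heyting operations $\cap$, $\cup$ and $\imp$ --- here $U\imp U'$ receives the code of $(0,1)\setminus\overline{U\cap([0,1]\setminus U')}$, so implication becomes a \emph{closure} carried out inside the tree; and, most importantly, the two quantifier clauses, which become genuine monadic second order quantifiers: $\ldbrack\exists p\,\psi\rdbrack_v$ is coded by the union, over all valid codes $Z$, of the codes of $\ldbrack\psi\rdbrack_{v(p\mapsto Z)}$, and $\ldbrack\forall p\,\psi\rdbrack_v$ by the interior of their intersection --- both legitimate precisely because the coding is onto $\cO((0,1))$. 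Then, by induction on $\IPC 2$-formulas built from $\bot$, $\imp$ and $\forall$ (the remaining connectives being definable from these, as noted above), I would obtain for each $\vp(p_1,\dots,p_k)$ an $\StwoS$-formula $\vp^*(X,Y_1,\dots,Y_k)$ stating ``$X$ is the code of $\ldbrack\vp\rdbrack^{(0,1)}_v$ whenever each $Y_i$ is the code of $v(p_i)$''. Finally, ``$\vp$ is true in $\cO(\bR)$'' translates to the $\StwoS$-sentence asserting that for all $Y_1,\dots,Y_k$, if every $Y_i$ is a valid code then $\vp^*(W,Y_1,\dots,Y_k)$ holds, where $W$ is the code of $(0,1)$ itself; decidability then follows from Rabin's theorem.

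The $\bQ$ case runs in parallel but needs its own coding, since $\bQ$ is zero-dimensional: an open $V\subseteq\bQ\cap(0,1)$ should be coded by (roughly) the set of dyadic cells all of whose rational points lie in $V$, equivalently via the closure in $[0,1]$ of its complement. Identifying the resulting $\StwoS$-definable class of codes uses that the rational points of $[0,1]$ are exactly those coded by \emph{ultimately periodic} branches, so that the relevant saturation condition reads ``every branch is a limit of ultimately periodic branches of the tree''; one then checks that ultimate periodicity of a branch --- a branch admitting a finite, cyclically arranged partition into classes along which the chosen child is class-determined --- is itself expressible in $\StwoS$.

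I expect the genuine obstacle to be the topological part of the coding: making the pseudocomplement --- really a passage to a closure --- act as a $\StwoS$-definable transformation of node sets, while handling uniformly the nuisance of dyadic boundary points, where a cell may meet a set only at one shared endpoint; and, for $\bQ$, pinning down precisely which subtrees code open subsets of $\bQ$ rather than of $\bR$ and verifying that this, together with the ultimate-periodicity condition, is $\StwoS$-definable. Everything else should be a long but routine clause-by-clause check that the inductive translation preserves meaning.
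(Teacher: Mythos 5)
Your treatment of the real line is essentially the paper's argument: pass to $(0,1)$, code an open set by its closed complement viewed as a set of nodes, cut the class of codes down by an $\StwoS$-definable saturation condition taking care of the dyadic endpoint ambiguity (the paper's $\Closed(S)$ clause relating branches $u01^\omega$ and $u10^\omega$ does exactly this), realise implication as a definable passage to a minimal closed superset, let the propositional quantifiers become genuine monadic set quantifiers over codes (sound because the coding is onto $\cO((0,1))$), and induct over $\bot$, $\imp$, $\forall$. The differences (cells meeting $F$ versus unions of canonical branches, etc.) are bookkeeping.

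The rational case, however, contains a genuine gap. You want the rational points of $(0,1)$ to be exactly the ultimately periodic branches, and you assert that ultimate periodicity of a branch, as well as your saturation condition ``every branch of the code is a limit of ultimately periodic branches'', is expressible in $\StwoS$. It is not: for any $\StwoS$ formula $\varphi(X)$, the set of words $\alpha\in\set{0,1}^\omega$ whose branch satisfies $\varphi$ is $\omega$-regular (simulate a tree automaton for $\varphi$ along the branch, precomputing which states accept the constant off-branch subtrees), and by B\"uchi's theorem every nonempty $\omega$-regular language contains an ultimately periodic word; if ultimate periodicity were definable, the set of non-ultimately-periodic words would be a nonempty $\omega$-regular language with no ultimately periodic member, a contradiction. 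So this half of the theorem does not go through as proposed. The repair is the move the paper makes: you do not need all rationals, only a countable dense subset of $(0,1)$ without isolated points, since any such subspace is homeomorphic to $\bQ$. Take the dyadic rationals: they correspond to the branches of the form $u10^\omega$, a trivially $\StwoS$-definable family, and the entire interpretation built for $\bR$ relativises to them simply by replacing the point predicate $U(X)$ with the predicate defining these branches.
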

\begin{proof}
Let us recall that we write $s_0(x)$ and $s_1(x)$ to denote 
respectively the left and  the right successors, and $x \leq y$ to denote
that $x$ is on the path from the root of the tree to $y$.

 Firstly, we give an interpretation of  the $\IPC 2$ theory of reals.
Instead of thinking about $\bR$ we take an open interval $(0,1)$ which has the same topological
properties. In particular any topological operation is taken in $(0,1)$, e.g., the closure of 
$(0,1\slash 3)$ is $(0,1\slash 3]$. 

Each real number $r\in(0,1)$ may be seen as its binary representation $0.a_0a_1a_2\dots$,
 where $a_i\in\set{0,1}$ and $r=\Sigma_{i=0}^{\infty} a_i 2^{-i-1}$.
Such representations can be interpreted as infinite paths in $T^\omega$.
A binary sequence $0.a_0a_1a_2\dots$ is therefore a path 
$\set{\varepsilon, s_{a_0}(\veps), s_{a_1}s_{a_0}(\veps), \dots}$.
In what follows we will use both representations: infinite $\set{0,1}$--sequences
$a_0a_1a_2\dots$ (without the leading ``$0$.'')  
and paths in $T^\omega$.  
In order to have the unique representation of each real we exclude
sequences which have only finitely many zeros.

We can define on such infinite paths in $T^\omega$ 
the topology inherited from $(0,1)$.
 What we need is to assure that formulas of $\IPC 2$ 
can be effectively translated into formulas of $\StwoS$ such that 
they will be equivalent modulo the translation of open sets of both topologies.

An infinite binary path represents a real from $(0,1)$ if and only if it is not 
of the form $0^\omega$ or $u1^\omega$ for some $u\in\set{0,1}^*$.
The set of such paths is obviously  definable in $\StwoS$.
A formula $\textrm{Path}(X)$ stating that $X$ is an infinite path is just
$$
(X(x) \land X(y) \imp (x\leq y \lor y\leq x)) \land
$$
$$
\forall x (X(x) \imp (X(s_0(x)) \lor X(s_1(x)))) \land X(\varepsilon).
$$
Then a formula $U(X)$  defining the set of paths which represent some real 
can be written as:
$$
\textrm{Path}(X)\land \forall x\big(X(x)\imp \exists z \geq x\; X(s_0(z))\big)\land \exists x\, X(s_1(x)).
$$
The  first conjunct of the above formula  states that $X$ is an infinite path, the second one
states that there are infinitely many $0$'s in $X$ and the third conjunct states that there is at least one $1$ in $X$. 
For a set $X$ such that $\textrm{Path}(X)$,  let $r(X)$ be a real represented by $X$.

We need to  represent not only  real numbers but also  
subsets of $(0,1)$. For a subset $S \subseteq T^\omega$, by
$R(S)$ we denote a set of reals such that their corresponding paths are contained 
in $S$,
$$
R(S)=\set{r(X) \colon  U(X) \land X\subseteq S  }.
$$

We will represent open subsets of $(0,1)$ by their closed complements. For a set $S\subseteq T^\omega$,
$R(S)$ is closed in $(0,1)$ if the following formula, $\Closed(S)$, holds:
\begin{eqnarray*}
\lefteqn{\forall X\subseteq S\{[\textrm{Path}(X)\land \exists y(X(s_0(y)) \land  \forall z\geq s_0(y)\, \neg X(s_0(z))))] \imp  }\\
          &&   \exists Y\subseteq S\exists y[\textrm{Path}(Y)\land Y(y)\land X(y)\land \\
          &&   \ \ \ \ \ \ \ \ \ \ \ \ \ \   X(s_0(y)) \land \forall z\geq s_0(y)( X(z)\imp X(s_1(z)))  \land \\
         && \ \ \ \ \ \ \ \ \ \ \ \ \ \  Y(s_1(y)) \land \forall z \geq s_1(y) (Y(z)\imp Y(s_0(z)))]\}.
\end{eqnarray*}
The formula above states that if a path $X$ of the form $u01^\omega$ is a subset of $S$ then there 
is a path $Y\subseteq S$ of the form $u10^\omega$. The condition is necessary because  we do not allow paths of the form $u1^\omega$ to represent reals (and satisfy the predicate $U(X)$). 
Thus, if we have such a path 
$X\subseteq S$, then we require that $S$ contains also a path $Y$ such that $r(X)=r(Y)$ and $U(Y)$.
Otherwise, it could happen that for some sequence of sets 
$\{X_i\subseteq S\colon i\in\omega\land U(X_i)\}$ such that $\lim_{i\rightarrow\infty} r(X_i)=r(X)$
(in fact all $r(X_i)$ may be less than $r(X)$) there is no $Y\subseteq S$ such that $U(Y)$ and $r(Y)=r(X)$.

We use two facts about sets satisfying formula $\Closed(S)$. 
\begin{claim}
\begin{enumerate}
\item
For any $C$ closed in $(0,1)$ there exists $S\subseteq T^\omega$ such that 
$\Closed(S)$ and $R(S)=C$.
\item
For any $S\subseteq T^\omega$ such that $\Closed(S)$, $R(S)$ is closed in $(0,1)$.
\end{enumerate}
\end{claim}
\begin{proof}
To show $1$ it is enough to take $S=\bigcup\{X\subseteq T^\omega\colon U(X)\land r(X)\in C\}$.
Then,  $\Closed(S)$ holds. Indeed, let a path $u01^\omega$ be a subset 
of $S$, then there is a sequence of reals $r_i\in C$, for $i\in \omega$ such that $r_i=r(u01^{i}v_i)$, 
for  infinite binary words $v_i$ where all paths $u01^{i}v_i$ 
are subsets  of $C$. 
The sequence $r_i$ converges to a real $r(u10^\omega)$
and since $C$ is closed $r(u01^\omega)\in C$ and so $u10^\omega$ is a subset of $S$.   

Obviously, $C\subseteq R(S)$. To prove the converse let us assume that $r=r(X)\in R(S)$ 
for some $X\subseteq R(S)$ such that $U(X)$. Let us assume towards a contradiction that $r\not\in C$.
We have two cases to consider. The first one when $X$ is of the form $u10^\omega$ and 
the second, complementary case. We consider the latter. Then, for each $i\in \omega$ 
there is $r_i\in C$ and $X_i\subseteq S$ such that $U(X_i)$, $r_i=r(X_i)$ and $X$ has a common initial segment
 with $X_i$ of length $i$. This is so because any element of $S$ belongs to a path representing 
 a real from $C$. Now, $r=\lim_{i\rightarrow \infty} r_i$ and, since $C$ is closed, $r\in C$, a contradiction.
As for the case of $X=u10^\omega$ we repeat the same reasoning either with $X$ or 
with a path $u01^\omega$. In both cases we get the same contradiction $r(X)=r(u01^\omega)\in C$.

To show $2$  let $r_i\in R(S)$ be a sequence of reals converging to some $r\in(0,1)$. 
Let $P_i\subseteq S$ be such that $U(P_i)$ and $r_i=r(P_i)$ and let $P\subseteq T^\omega$
be such that $U(P)$ and  $r=r(U)$.
If $P_i$ are of the form $u01^{n_i}v_i$, for some strictly increasing sequence $n_i$, then $P$
is a path $u10^\omega$ and, 
by $\Closed(S)$, $P\subseteq S$. It follows that $r \in R(S)$.
Otherwise, $P$ is a path with infinitely many $1$'s and $r=\sum_{i\in\omega} 2^{-n_i}$,
for some strictly increasing sequence $n_i$. Now, if $|r-r_i| < 2^{-n_i-2}$ then $P$ and $P_i$
have a common initial segment of length $n_i$. 
We obtain that
$P\subseteq \bigcup_{i\in\omega} P_i\subseteq S$ and, therefore, $r\in R(S)$.
\end{proof}

The above claim shows that sets of the form $\Closed(S)$ are a good representation
of closed subsets of $(0,1)$. We can write an $\StwoS$ formula  $\clBelong(X,S)$ expressing that a real $r(X)$ belongs to a closed set $R(S)$.
It has the form
$$
U(X)\land \Closed(S)\land X\subseteq S.
$$
Similarly, we can express that a closed set $R(S)$ is included in a set $R(T)$ with
$$
\forall X(\clBelong(X,S)\imp \clBelong(X,T)).
$$

Let us state a useful lemma about definability in $\StwoS$. 
\begin{lem}\label{lem:minmax}
For each $\StwoS$ formula $\vp(X)$ with $X$ a free second order variable and possibly with some first
and second order parameters there exists a formula  $\min_\vp(X)$ such that
\begin{itemize}
\item
 if there exists a unique minimal closed set $C\subseteq (0,1)$ such that 
$\vp(X)$ is true for any $X$ with $C=R(X)$,  
then $\min_\vp(X)$ is true only about sets $X$ satisfying $C=R(X)$, 
\item
$\min_\vp(X)$ if false for any set $X$, otherwise.
\end{itemize}

\end{lem}
\begin{proof}
We write a formula $\min_\vp(X)$ as 
$$
\vp(X)\land \Closed(X)\land
$$
$$
\forall Y((\Closed(Y)\land \vp(Y))\imp \forall Z( (U(Z)\land \clBelong(Z,X))\imp \clBelong(Z,Y))).
$$
\end{proof}

Now,we define an inductive translation of an $\IPC 2$ formula $\vp(p_1,\dots,p_n)$  
into an $\StwoS$ formula  $\vp^*(T,T_1,\dots,T_n)$. We represent open sets by its closed complements. 
We require the following property:
for all open subsets $R, R_1,\dots, R_n$ of $(0,1)$ and 
all $X,X_1,\dots,X_n\subseteq T^\omega$ such that $\Closed(X)$, $R=(0,1)\setminus R(X)$ 
and $\Closed(X_i)$, $R_i=(0,1)\setminus R(X_i)$, for  $i\leq n$, we have the equivalence, 
$$
[\vp]^{(0,1)}_{\set{p_i\mapsto R_i}}=R \textrm{\ \ if and only if\ \ } 
$$
$$
  (\set{0,1}^*,s_0,s_1, \leq)\models \vp^*[X, X_1,\dots,X_n].
$$

If $\vp=\bot$, then $\vp^*=\forall x  T(x)$ (note that if $X=T^\omega$
then $R(X)=(0,1)$ and we want the complement of $X$ to be the empty set). 
If $\vp=p_i$, then 
$\vp^*= \forall Y (U(Y)\imp (\clBelong(Y,T) \Leftrightarrow \clBelong(Y,T_i)))$.

For $\vp=(\psi_1\imp \psi_2)$, we have
\begin{eqnarray*}
[\vp]^{(0,1)}_v&=& \intt\left(((0,1)\setminus [\psi_1]^{(0,1)}_v)\cup [\psi_2]^{(0,1)}_v\right)\\
&=&\max\{O\subseteq (0,1)\colon O\textrm{ is open } \land O\subseteq ((0,1)\setminus [\psi_1]^{(0,1)}_v)\cup [\psi_2]^{(0,1)}_v \}\\
&=& (0,1)\setminus \min\{ C\subseteq (0,1)\colon C\textrm{ is closed } \land \\
   && \ \ \ \ \ \ \ \ \ \ \ \ \ \ \ \ \ \ \ \ \ \ \ \ \ \ \ \ \ \ \  ([\psi_1]^{(0,1)}_v\cap
                     ((0,1)\setminus [\psi_2]^{(0,1)}_v))\subseteq C\}.\\
\end{eqnarray*}
By properties of the topology the above maximum and minimum exist. We need to write
a formula $\psi^*(T,T_1,\dots, T_n)$ such that with parameters $X_1,\dots, X_n$ substituted for 
$T_1,\dots,T_n$, respectively, it will be true only about the unique $T$ with
$$
R(T)=C_0=\min\{ C\subseteq (0,1)\colon C\textrm{ is closed } \land 
            ([\psi_1]^{(0,1)}_v\cap ((0,1)\setminus [\psi_2]^{(0,1)}_v))\subseteq C\}.
$$
Let $\widehat{\vp}(T,T_1,\dots,T_n)$ be a formula 
$$
\Closed(T)\land \psi^*_1(T^*_1,T_1,\dots,T_n)\land \psi^*_2(T^*_2,T_1,\dots,T_n)\land
$$
$$
\forall X((U(X)\land \neg \clBelong(X, T^*_1)\land \clBelong(X,T^*_2))\imp \clBelong(X,T)).
$$
The formula above expresses the definitional property of $C_0$ in $\StwoS$ and the topology of $T^\omega$
inherited from $(0,1)$. Now, as $\vp^*(T,T_1,\dots,T_n)$ we  take 
the formula $\min_{\widehat{\vp}(T,\dots)}(T,T_1,\dots,T_n)$ from Lemma~\ref{lem:minmax} where the minimum is taken over $T$. The formula $\vp^*(T,\dots)$ is true only about the set $C_0$ what 
proves the inductive thesis for $\vp$.

If $\vp=\forall p_n \psi(p_n)$ then 
\begin{eqnarray*}
[\vp]^{(0,1)}_v & = & 
            \intt(\bigcap_{O\textrm{ is open }} [\psi]^{(0,1)}_{v(p\mapsto O)})\\
&=& \max\{ S\subseteq (0,1)\colon S\textrm{ is open and for all open $O\subseteq (0,1)$, }
                                                      S\subseteq [\psi]^{(0,1)}_{v(p\mapsto O)} \}\\ 
&=& (0,1)\setminus \min\{C\subseteq (0,1)\colon C\textrm{ is closed and }\\
&& \textrm{\ \ \ \ \ \ \ \ \ \ \  \ \ \ \ \ \ \ \ \ \ \ \ \ \ \ \ \  for all open $O\subseteq (0,1)$, }
                                                    (0,1)\setminus [\psi]^{(0,1)}_{v(p\mapsto O)}\subseteq C\}.
\end{eqnarray*}
Since any topology is a complete Heyting algebras,  the above sets are well defined. 
The last expression can be translated to an $\StwoS$ formula. Let $\widehat{\vp}(T)$ be the following
formula
\begin{eqnarray*}
  \lefteqn{\Closed(T)\land}\\
  \lefteqn{ \forall W\forall T_n [(\Closed(W)\land \Closed(T_n)\land \psi^*(W,T_1,\dots,T_n))\imp}\\
   &&\ \ \ \ \ \ \ \ \ \ \ \ \  \ \  \ \ \ \ \forall Y ((U(Y)\land \clBelong(Y,W)) \imp \clBelong(Y,T))].
\end{eqnarray*}
Now, using Lemma \ref{lem:minmax}, we can write $\vp^*(T)$ as 
$\min_{\widehat{\vp}(T)}(T,T_1,\dots,T_{n-1})$.

The above translation gives us decidability of $\IPC 2$ on $(0,1)$ since for any
$\IPC 2$ sentence $\vp$, 
$$
\textrm{$\vp$ is true in $(0,1)$ if and only if}
$$
$$ 
\textrm{$\forall T \forall X((\Closed(T)\land \vp^*(T) \land U(X)) \imp \neg \clBelong(X,T))$
is true in $\cT^\omega$.}
$$

A similar procedure gives also decidability of the $\IPC 2$ theory  of open subsets of rationals.
One needs to use the fact that the topology of dyadic rationals from $(0,1)$
is isomorphic to the topology of $\bQ$. 
Then, the set of paths which correspond to 
these rationals is easily definable as paths of the form $u10^\omega$, 
for some $u\in \set{0,1}^*$. Now, let $U_\bQ(X)$ be a formula which defines
these paths. In order to obtain an interpretation of the $\IPC 2$ theory of open subsets of rationals
one should restrict the universe of the given above interpretation to infinite paths
satisfying $U_\bQ$. Syntactically, one should replace each occurrence of $U(X)$
with $U_\bQ(X)$.
\end{proof}

The above reduction gives a non elementary upper bound on the complexity of $\IPC 2$
on reals or rationals. We conjecture that the complexity of  these theories is in fact elementary.
\bigskip

\noindent
{\bf Acknowledgments.} I would like to thank Pawe{\l} Urzyczyn for urging me to write 
this paper.


\end{document}